\documentclass[11pt]{article}
\usepackage{a4wide}

\usepackage[a4paper, margin=2.3cm]{geometry}
\usepackage{amsmath,amssymb,amsthm}
\usepackage{color}
\usepackage{parskip}
\usepackage{hyperref}
\usepackage{parskip}
\usepackage{setspace}
\usepackage{graphicx}
\usepackage{mathtools}
\usepackage[thinc]{esdiff}
 
\newtheorem{theorem}{Theorem}[section]

\newtheorem{lemma}[theorem]{Lemma}

\newtheorem*{definition*}{Definition}

\def\C{\mathcal{C}}

\def\E{E}

\def \E{{\mathbb E}}
\def \B{{\mathcal B}}

\begin{document}
\title{Configurations of rectangles in a set in $\mathbb{F}_q^2$}
\author{
Doowon Koh\thanks{Department of Mathematics, Chungbuk National University, Korea. Email: koh131@chungbuk.ac.kr}\and
Sujin Lee\thanks{Department of Mathematics, Chungbuk National University, Korea. Email: sujin4432@chungbuk.ac.kr}\and
Thang Pham\thanks{Department of Mathematics, Vietnam National University. Email: phamanhthang.vnu@gmail.com}\and
Chun-Yen Shen\thanks{Department of Mathematics, National Taiwan University. Email: cyshen@math.ntu.edu.tw} }
\date{}
\maketitle
\begin{abstract}
Let $\mathbb{F}_q$ be a finite field of order $q$. In this paper, we study the distribution of rectangles in a given set in $\mathbb{F}_q^2$. More precisely, for any $0<\delta\le 1$, we prove that there exists an integer $q_0=q_0(\delta)$ with the following property: if $q\ge q_0$ and $A$ is a multiplicative subgroup of $\mathbb{F}^*_q$ with $|A|\ge q^{2/3}$, then  any set $S\subset \mathbb{F}_q^2$ with $|S|\ge \delta q^2$ contains at least $\gg \frac{|S|^4|A|^2}{q^5}$ rectangles with side-lengths in $A$. We also consider the case of rectangles with one fixed side-length and the other in a multiplicative subgroup $A$. 
\end{abstract}
\section{Introduction}
Let $\mathbb{F}_q$ be a finite field of order $q$, and we always assume that $q$ is a prime power with $q\equiv 3 \mod{4}.$  

A rectangle in $\mathbb{F}_q^d$ is a quadruple $(a, b, c, d)\in \mathbb{F}_q^d\times \mathbb{F}_q^d\times \mathbb{F}_q^d\times \mathbb{F}_q^d$ such that 
\[(a-b)\cdot (c-b)=0, ~(d-c)\cdot (b-c)=0, ~(a-d)\cdot (c-d)=0, ~(d-a)\cdot (b-a)=0 .\]

The main purpose of this paper is to study the following question: for $\lambda, \beta\in \mathbb{F}_q^*$ and $S\subset \mathbb{F}_q^2$, what conditions on $S$ will we need so that $S$ contains a rectangle with side-lengths $\alpha$ and $\beta$? Here the distance between two points $x$ and $y$ is defined by $||x-y||=(x_1-y_1)^2+(x_2-y_2)^2$. 

This question is similar to a question of  Graham in $\mathbb{Z}$: is it true that for any $\delta>0$, there exists $N_0=N_0(\delta)$ such that for any set $G$ in $[0, \ldots, N-1]\times [0, \ldots, N-1]$, $N>N_0$, of density at least $\delta$, one can find a quadruple of the form $\{(a, b), (a+d, b), (a, b+d), (a+d, b+d)\}$ in $G$ for some $d\ne 0$? 

An affirmative answer for this question can be found in \cite{soly, pre}, we refer the reader to \cite{shkredov, shkredov22} for more discussions.

In a recent paper, Lyall and Magyar \cite{lyall2} developed a counting lemma by using a weak hypergraph regularity lemma to prove that for any $0<\delta\le 1$, there exists an integer $q_0=q_0(\delta)$ such that if $q\ge q_0$ and $t_1, \ldots, t_d\in \mathbb{F}_q^*$, $d\ge 2$, then for any set $S\subset \mathbb{F}_q^{2d}$ with $|S|\ge \delta q^{2d}$ contains points 
\[\{x_{11}, x_{12}\}\times \cdots\times \{x_{d1}, x_{d2}\}\subset \mathbb{F}_q^{2d}, ~||x_{j2}-x_{j1}||=t_j, ~1\le j\le d.\]
So when $d=2$ and $S\subset \mathbb{F}_q^4$ with $|S|\ge \delta q^{4}$, and $q$ is large enough, we always can find a rectangle in $S$ with a given pair of non-zero side-lengths. 

Throughout this paper, we use the following notations: $X \ll Y$ means that there exists  some absolute constant $C_1>0$ such that $X \leq C_1Y$, and  $X\sim Y$ means $Y\ll X\ll Y$.

By using Lyall and Magyar's method and structures of multiplicative subgroups in $\mathbb{F}_q$, we are able to give some answers for two dimensional problem as follows. \\

\begin{theorem}\label{thm2}
For any $0<\delta\le 1$, there exists an integer $q_0=q_0(\delta)$ with the following property: if $q\ge q_0$ and $A$ is a multiplicative subgroup of $\mathbb{F}^*_q$ with $|A|\ge q^{2/3}$, then  any set $S\subset \mathbb{F}_q^2$ with $|S|\ge \delta q^2$ contains at least $\gg \frac{|S|^4|A|^2}{q^5}$ rectangles with side-lengths in $A$. 

\end{theorem}

\bigskip

\begin{theorem}\label{thm33333}

Let $t$ be a non-zero square number in $\mathbb{F}_q$. For any $0<\delta\le 1$, there exists an integer $q_0=q_0(\delta)$ with the following property: if $q\ge q_0$ and $A$ is a multiplicative subgroup of $\mathbb{F}^*_q$ with $|A|\ge q^{2/3}$, then  any set $S\subset \mathbb{F}_q^2$ with $|S|\ge \delta q^2$ contains at least $\gg \frac{|S|^4|A|}{q^5}$ rectangles with one side-length $t$ and the other side-length in $A$. 
\end{theorem}




There is also a series of papers which study similar geometric questions, we refer the interested reader to \cite{0, 1, 1.5, 2} for more details. 
\section{Proof of Theorem \ref{thm2}}
To prove Theorem \ref{thm2}, as mentioned in the introduction, we adapt the method developed in \cite{lyall2}. 

Let $A$ be a multiplicative subgroup of  $\mathbb{F}_q^*$ and $\chi$ denote a non-trivial additive character of $\mathbb F_q.$ We first give a proof for the case $|A|\ge q^{2/3}$ in an arbitrary finite field $\mathbb{F}_q$. Define $\sigma(x):=\frac{q}{|A|}$ if $x\in A$ and $0$ otherwise. We will invoke a classical result that if $|A|\ge q^{2/3}$, then 
\begin{equation}\label{MSExp}\left\vert \sum_{x\in A}\chi(x\cdot m)\right\vert\ll q^{1/2},\end{equation}
whenever $m\ne 0$ (for example, see \cite{PK}).

For functions $f_1, f_2, f_3, f_4\colon \mathbb{F}_q^2\to [-1,1]$, we define
\[N(f_1, f_2, f_3, f_4):=\E_{a, b, c, d}f_1(a, c)f_2(a, d)f_3(b, c)f_4(b, d)\sigma(a-b)\sigma(c-d), \]
and 
\[M(f_1, f_2, f_3, f_4):=\E_{a, b, c, d}f_1(a, c)f_2(a, d)f_3(b, c)f_4(b, d),\]
where $\E_{a,b,c,d}:= q^{-4} \sum_{a,b,c,d\in \mathbb F_q}.$
Recall that  we identify a set $S$ with the characteristic function $\chi_S$ on the set $S.$ For example, if $S\subset \mathbb F_q^2,$ then we have
\[N(S, S, S.S)=\E_{a, b, c, d}S(a, c) S(a, d)S(b, c) S(b, d) \sigma(a-b)\sigma(c-d),\]
and 
\[M(S, S, S, S)=\E_{a, b, c, d}S(a, c)S(a, d)S(b, c)S(b, d).\]
Using the Cauchy-Schwarz inequality two times for $|S|=\sum_{a}\sum_{b}S(a, b)$, we have
\begin{equation}\label{xxxx}M(S, S, S, S)\ge \left(\frac{|S|}{q^2}\right)^4.\end{equation}

Let $V_1$ and $V_2$ be pairwise orthogonal one-dimensional subspaces. We write $\mathbb F_q^2=V_1\times V_2$ with $V_j\simeq \mathbb F_q. $ 
For any function $f\colon\mathbb{F}_q^2\to [-1, 1],$  we define 
\[||f||_{\square(V_1\times V_2)}:=M(f, f, f, f)^{1/4}.\]
\begin{lemma}\label{basic1}
For functions $f_1, f_2, f_3, f_4\colon \mathbb{F}_q^2\to [-1, 1]$, we have 
\[ M(f_1, f_2, f_3, f_4)\le \min_{i}||f_i||_{\square(V_1\times V_2)}.\]
\end{lemma}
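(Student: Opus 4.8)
The plan is to deduce the lemma from the stronger ``generalized von Neumann'' type bound
\[ M(f_1,f_2,f_3,f_4)\le \prod_{i=1}^{4}\|f_i\|_{\square(V_1\times V_2)}. \]
Granting this, the claimed inequality is immediate: since each $f_i$ takes values in $[-1,1]$ we have $\|f_i\|_{\square(V_1\times V_2)}^{4}=M(f_i,f_i,f_i,f_i)\le \E_{a,b,c,d}1=1$, so every factor in the product is at most $1$; fixing any index $i$ and bounding the remaining three factors by $1$ gives $M(f_1,f_2,f_3,f_4)\le \|f_i\|_{\square(V_1\times V_2)}$, and hence $M(f_1,f_2,f_3,f_4)\le \min_i\|f_i\|_{\square(V_1\times V_2)}$. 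Note also that $\|f\|_{\square(V_1\times V_2)}$ is well defined, since $M(f,f,f,f)=\E_{a,b}\bigl(\E_c f(a,c)f(b,c)\bigr)^{2}\ge 0$.

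To prove the displayed bound the idea is to run the standard iterated Cauchy--Schwarz (Gowers--Cauchy--Schwarz) argument in two steps, peeling off the two coordinate directions one at a time. First, regrouping the average as
\[ M(f_1,f_2,f_3,f_4)=\E_{c,d}\Bigl(\E_a f_1(a,c)f_2(a,d)\Bigr)\Bigl(\E_b f_3(b,c)f_4(b,d)\Bigr) \]
and applying Cauchy--Schwarz in the pair $(c,d)$, then expanding each square and relabelling the duplicated averaging variable, yields
\[ M(f_1,f_2,f_3,f_4)^{2}\le M(f_1,f_2,f_1,f_2)\,M(f_3,f_4,f_3,f_4). \]
Next, for a term such as $M(f_1,f_2,f_1,f_2)$ one regroups the other way,
\[ M(f_1,f_2,f_1,f_2)=\E_{a,b}\Bigl(\E_c f_1(a,c)f_1(b,c)\Bigr)\Bigl(\E_d f_2(a,d)f_2(b,d)\Bigr), \]
and Cauchy--Schwarz in $(a,b)$ together with the same expand-and-relabel step gives $M(f_1,f_2,f_1,f_2)^{2}\le M(f_1,f_1,f_1,f_1)\,M(f_2,f_2,f_2,f_2)=\|f_1\|_{\square(V_1\times V_2)}^{4}\|f_2\|_{\square(V_1\times V_2)}^{4}$; likewise $M(f_3,f_4,f_3,f_4)\le\|f_3\|_{\square(V_1\times V_2)}^{2}\|f_4\|_{\square(V_1\times V_2)}^{2}$. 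Combining the three inequalities gives $M(f_1,f_2,f_3,f_4)^{2}\le\prod_i\|f_i\|_{\square(V_1\times V_2)}^{2}$, and taking the (nonnegative) square root finishes the proof; if the left-hand side happens to be negative the bound is trivial since the right-hand side is nonnegative.

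The argument is entirely routine, so there is no genuine obstacle; the only point requiring care is the bookkeeping in each Cauchy--Schwarz step — namely verifying that after expanding a square $\bigl(\E_a f_i(a,c)f_j(a,d)\bigr)^{2}$ and renaming the new copy of the averaging variable one recovers precisely an expression of the form $M(g_1,g_2,g_3,g_4)$ with the four functions sitting in the correct slots.
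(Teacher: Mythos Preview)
Your proof is correct and follows essentially the same approach as the paper: both establish the Gowers--Cauchy--Schwarz inequality for the box norm via two applications of Cauchy--Schwarz, one in each coordinate direction, and then use $\|f_i\|_{\square}\le 1$ to pass from the product bound to the minimum. The only cosmetic differences are that you apply Cauchy--Schwarz first in $(c,d)$ and then in $(a,b)$ while the paper does the reverse, and that you state the intermediate product inequality $M(f_1,f_2,f_3,f_4)\le\prod_i\|f_i\|_{\square}$ explicitly whereas the paper leaves it implicit.
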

\begin{proof}
Applying the Cauchy-Schwarz inequality, we have 
\begin{align*}
M(f_1, f_2, f_3, f_4)&=\frac{1}{q^4}\sum_{a, b, c, d}f_1(a, c)f_2(a, d)f_3(b, c)f_4(b, d)\\
&=\frac{1}{q^4}\sum_{a, b}\left(\sum_{c}f_1(a, c)f_3(b, c)\right)\left(\sum_{d}f_2(a, d)f_4(b, d)\right)\\
&=\frac{1}{q^4}\left(  \sum_{a, b}\left(\sum_{c}f_1(a, c)f_3(b, c)\right)^{2}    \right)^{1/2}\cdot \left(  \sum_{a, b}\left(\sum_{d}f_2(a, d)f_4(b, d)\right)^{2}    \right)^{1/2}\\
&=M(f_1, f_1, f_3, f_3)^{1/2}\cdot M(f_2, f_2, f_4, f_4)^{1/2}.
\end{align*}
Repeating the argument for $M(f_1, f_2, f_3, f_3)$ and $M(f_2, f_2, f_4, f_4)$, and using the fact that $M(f_i, f_i, f_i, f_i)\le 1$ for $1\le i\le 4$, the lemma  follows. 
\end{proof}
To prove Theorem \ref{thm2}, we need two key lemmas, the first one is a weak hypergraph regularity lemma, and the second is a generalized von-Neumann type estimate. 
\subsection{A weak hypergraph regularity lemma}
Let $V$ be a subset in $\mathbb{F}_q$. Let $\B$ be a $\sigma$-algebra on $V$, i.e. a collection of sets in $V$ which contains $V$, $\emptyset$, and is closed under finite intersections, unions, and complementation. One can think of $\B$ as a partition of $V$, but the $\sigma$-algebra term gives us a better understanding since it offers intuitive ideas from measure theory and probability theory. 

For a $\sigma$-algebra $\B$ on $V$, the complexity of $\B$, denoted by $\mathtt{complexity}(\B)$, is defined to be the smallest number of sets in $V$ needed to generate $\B$. Each of those sets will be called an atom of $
\B$. 

Let $f\colon V\to \mathbb{R}$ be a function, we define the conditional expectation $\E(f|\B)\colon V\to \mathbb{R}$ by the formula 
\[\E(f|\B)(x):=\frac{1}{|\B(x)|}\sum_{y\in \B(x)}f(y),\]
where $\B(x)$ denotes the smallest element of $\B$ that contains $x$. 

If $\B_1$ and $\B_2$ are two $\sigma$-algebras on $V_1$ and $V_2$, respectively, we use the notation $\B_1\vee\B_2$ to denote the smallest $\sigma$-algebra on $V_1\times V_2$ that contains both $\B_1\times V_2$ and $V_1\times \B_2$. Note that atoms of $\B_1\vee\B_2$ are sets of the form $U\times V$, where $U$ and $V$ are atoms of $\B_1$ and  $\B_2$, respectively.

The following is a consequence of Lemma 2.2 in \cite{lyall2}. Notice that the authors in \cite{lyall2}  assumed that $V_1\simeq \mathbb F_q^2\simeq V_2$, but the argument is identical with $V_1=V_2=\mathbb{F}_q$ and with the corresponding norm $\square(V_1\times V_2)$.
\begin{lemma}\label{co2.2}
Let $V_1=V_2=\mathbb{F}_q$ and $S\subset V_1\times V_2$. For any $\epsilon>0$, there exist $\sigma$-algebras $\B$ on $V_1$ and $\C$ on $V_2$ such that each algebra is spanned by at most $O(\epsilon^{-8})$ sets, and 
 \[||S-\E(S| \B\vee\C)||_{\square(V_1\times V_2)}\le \epsilon.\]
\end{lemma}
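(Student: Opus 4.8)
The plan is to run a Frieze--Kannan style energy-increment argument with respect to the box norm $\|\cdot\|_{\square(V_1\times V_2)}$. For a pair of $\sigma$-algebras $\B$ on $V_1$ and $\C$ on $V_2$ I define the \emph{index}
\[
\mathrm{ind}(\B,\C):=\|\E(S\mid \B\vee\C)\|_2^2=\E_{a,c}\,\E(S\mid\B\vee\C)(a,c)^2,
\]
where $\langle f,g\rangle:=\E_{a,c}f(a,c)g(a,c)$ and $\|f\|_2^2:=\langle f,f\rangle$. Since $0\le S\le 1$ and the conditional expectation is the orthogonal projection onto the space of $(\B\vee\C)$-measurable functions, one has $0\le\mathrm{ind}(\B,\C)\le\|S\|_2^2\le 1$. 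I would start from the trivial algebras $\B_0=\{\emptyset,V_1\}$, $\C_0=\{\emptyset,V_2\}$ and iterate: as long as $f:=S-\E(S\mid\B\vee\C)$ satisfies $\|f\|_{\square(V_1\times V_2)}>\epsilon$, I will refine $\B$ and $\C$ by one generating set each in a way that increases the index by a fixed amount $\gg\epsilon^8$. As the index is bounded by $1$, this terminates after $O(\epsilon^{-8})$ steps, and since each step adds a single generator to each algebra, the resulting $\sigma$-algebras have complexity $O(\epsilon^{-8})$, which is exactly the claimed bound.

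The heart of the matter is the following cut-extraction step: if $\|f\|_{\square(V_1\times V_2)}>\epsilon$, then there exist sets $U\subseteq V_1$ and $W\subseteq V_2$ with $|\langle f,1_{U\times W}\rangle|\gg\epsilon^4$. To prove it, regroup the defining average as $M(f,f,f,f)=\E_{b,d}\big[f(b,d)\,\Psi(b,d)\big]$, where $\Psi(b,d):=\E_{a,c}f(a,c)f(a,d)f(b,c)$; since $|f|\le 1$ this gives $\E_{b,d}|\Psi(b,d)|\ge M(f,f,f,f)>\epsilon^4$, so some fixed pair $(b_0,d_0)$ satisfies $|\Psi(b_0,d_0)|>\epsilon^4$, that is
\[
\left|\E_{a,c}f(a,c)\,g(a)\,h(c)\right|>\epsilon^4,\qquad g(a):=f(a,d_0),\ \ h(c):=f(b_0,c),
\]
with $g,h$ taking values in $[-1,1]$. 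Thus the mass of the box norm is carried by a single bounded \emph{product} function $g\otimes h$. I then convert $g\otimes h$ into a product of indicators by a routine level-set decomposition: writing $g$ and $h$ as signed averages of indicators of their super-level and sub-level sets and applying pigeonhole over the finitely many sign choices and thresholds, one obtains sets $U,W$ with $|\langle f,1_{U\times W}\rangle|\ge\tfrac14|\langle f,g\otimes h\rangle|\gg\epsilon^4$.

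Given the box $U\times W$, the energy increment is a clean Hilbert-space computation. Let $\B'$ be the $\sigma$-algebra generated by $\B$ together with $U$, and $\C'$ the one generated by $\C$ together with $W$, so that $1_{U\times W}$ is $(\B'\vee\C')$-measurable. Writing $P$ for the orthogonal projection onto $(\B'\vee\C')$-measurable functions, Pythagoras for nested projections gives
\[
\mathrm{ind}(\B',\C')-\mathrm{ind}(\B,\C)=\|\E(S\mid\B'\vee\C')-\E(S\mid\B\vee\C)\|_2^2=\|Pf\|_2^2 .
\]
Since $P$ fixes $1_{U\times W}$ and $\|1_{U\times W}\|_2=\big(|U||W|/q^2\big)^{1/2}\le 1$, the Cauchy--Schwarz inequality yields $\|Pf\|_2\ge|\langle f,1_{U\times W}\rangle|\gg\epsilon^4$, hence the index grows by $\gg\epsilon^8$ at each step, closing the iteration as described.

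The step I expect to be the main obstacle is the cut extraction, and in particular securing the correct exponent $\epsilon^4$: the pigeonhole identity above is precisely what forces a \emph{single} bounded product function (rather than a general symmetric kernel) to absorb the box-norm mass, and this exponent is what propagates through the squared energy increment to give the final complexity $O(\epsilon^{-8})$. The level-set rounding and the Pythagoras bookkeeping are routine, but one must verify that each refinement adds only $O(1)$ generators to each algebra, so that the complexity — the minimal number of generating sets — stays $O(\epsilon^{-8})$.
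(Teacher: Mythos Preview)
Your proposal is correct and is precisely the energy--increment argument of Lyall--Magyar that the paper invokes; the paper does not actually supply its own proof of this lemma, but simply cites \cite{lyall2} and remarks that the argument there (stated for $V_j\simeq\mathbb F_q^2$) goes through verbatim with $V_1=V_2=\mathbb F_q$ and the corresponding box norm. Your cut-extraction step (fixing $(b_0,d_0)$ to obtain a product weight, then rounding to a box) followed by the $\gg\epsilon^8$ index increment is exactly that argument, and yields the stated complexity bound $O(\epsilon^{-8})$.
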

We recall that
 \[\E(S|\B\vee\C)(x)=\frac{|S\cap B\times C|}{|B||C|},\]
where $B\times C$ is the atom of $\B\times \C$ containing $x$.

\subsection{A generalized von-Neumann type estimate}
We begin by recalling some notations and definitions in Fourier analysis over finite fields. Let $\chi$ denote a non-trivial additive character of $\mathbb F_q.$ For a function $f:\mathbb F_q^n\to \mathbb C,$  its Fourier transform, denoted by $\widehat{f}$, is defined by
$$ \widehat{f}(m)= q^{-n}\sum_{x\in \mathbb F_q^n} \chi(-m\cdot x) f(x).$$ 
For instance, we have
$$\widehat{\sigma}(m)=q^{-1}\sum_{m\in \mathbb F_q} \chi(-m\cdot x) \sigma(x)=|A|^{-1} \sum_{x\in A} \chi(-m\cdot x),$$
where we recall that for a multiplicative subgroup $A$ of $\mathbb F_q^*,$  we define $\sigma{(x)}=q/|A| $ for $x\in A,$ and $0$ otherwise. By \eqref{MSExp}, we obtain that if $|A|\ge q^{2/3},$ then
$$ \max_{m\ne 0}|\widehat{\sigma}(m)| \ll \frac{q^{1/2}}{|A|} \quad \mbox{and}~~ \widehat{\sigma}(0)=1.$$

By the orthogonality  of $\chi$,  one can easily deduce the following Fourier inversion formula: 
$$ \|\widehat{f}\|_2:= \left(\sum_{m\in \mathbb F_q^n} |\widehat{f}(m)|^2\right)^{1/2}= \left(q^{-n}\sum_{x\in \mathbb F_q^n} |f(x)|^2\right)^{1/2}.$$
We also have the Fourier inversion theorem:
$$ f(x)=\sum_{m\in \mathbb F_q^n} \widehat{f}(m) \chi(x\cdot m).$$ 
It is clear that $||\widehat{f}||_2 \le 1 $ for any function $f: \mathbb F_q \to [-1, 1].$
Now, we deduce a result on a generalized von-Neumann type estimate.
\begin{lemma}\label{co2.3}
For functions $f_1, f_2, f_3, f_4\colon \mathbb{F}_q^2\to [-1, 1]$, we have 
\[|N(f_1, f_2, f_3, f_4)|\le \min_{j} ||f_j||_{\square(V_1\times V_2)}  
+O\left(\frac{q^{1/8}}{|A|^{1/4}}\right),\]
where $A$ is a multiplicative subgroup of $\mathbb F_q^*$ with $|A|\ge q^{2/3}.$
\end{lemma}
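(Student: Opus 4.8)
The plan is to prove Lemma \ref{co2.3} by passing to the Fourier side, isolating the contribution of the zero frequency of $\widehat\sigma$ (which reconstructs the ``model'' quantity $M$), and bounding all the remaining terms using the uniform estimate $\max_{m\ne 0}|\widehat\sigma(m)|\ll q^{1/2}/|A|$ together with Lemma \ref{basic1} and Parseval. First I would write $\sigma(a-b)=\sum_{m}\widehat\sigma(m)\chi((a-b)\cdot m)$ and $\sigma(c-d)=\sum_{n}\widehat\sigma(n)\chi((c-d)\cdot n)$, substitute into the definition of $N(f_1,f_2,f_3,f_4)$, and interchange the order of summation. After collecting the characters, the average over $a,b,c,d$ factors, and one recognizes that
\[
N(f_1,f_2,f_3,f_4)=\sum_{m,n}\widehat\sigma(m)\widehat\sigma(n)\,\E_{a,b,c,d}\,f_1(a,c)f_2(a,d)f_3(b,c)f_4(b,d)\chi((a-b)\cdot m)\chi((c-d)\cdot n).
\]
The inner expectation is exactly $M(g_1,g_2,g_3,g_4)$ for suitably modulated functions $g_i$ (e.g. $g_1(a,c)=f_1(a,c)\chi(a\cdot m+c\cdot n)$, and similarly for the others with the appropriate signs), and crucially $\|g_i\|_{\square(V_1\times V_2)}=\|f_i\|_{\square(V_1\times V_2)}$ since the modulating character has modulus one and the box norm is built from a product that cancels all such unimodular twists.

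Next I would split the double sum into the single term $m=n=0$, where $\widehat\sigma(0)=1$ and the inner quantity is precisely $M(f_1,f_2,f_3,f_4)$, and the remaining terms where $(m,n)\ne(0,0)$. For the main term, Lemma \ref{basic1} gives $|M(f_1,f_2,f_3,f_4)|\le \min_j\|f_j\|_{\square(V_1\times V_2)}$. For the error terms I would bound $|\widehat\sigma(m)\widehat\sigma(n)|$ by $(q^{1/2}/|A|)^2\cdot(\text{something})$ when at least one index is nonzero, but one has to be a little careful because one of $m,n$ may still be zero (contributing a factor $\widehat\sigma(0)=1$). I would therefore separate the three cases $m\ne0,n=0$; $m=0,n\ne0$; and $m\ne0,n\ne0$, and in each case use the trivial bound $|M(g_1,g_2,g_3,g_4)|\le 1$ (since all functions are $[-1,1]$-valued) or, better, a Cauchy–Schwarz / Parseval estimate on the relevant $\ell^1$ sum of $|\widehat\sigma|$. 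The key computation is to control $\sum_{m\ne 0}|\widehat\sigma(m)|$: writing $|\widehat\sigma(m)|\le |\widehat\sigma(m)|^{1/2}\cdot(q^{1/2}/|A|)^{1/2}$ and applying $\sum_m|\widehat\sigma(m)|^2=\|\widehat\sigma\|_2^2=q^{-1}\sum_x\sigma(x)^2=q/|A|\cdot(q/|A|)=q^2/|A|^2$... actually more simply $\|\widehat\sigma\|_2^2 = q^{-1}|A|(q/|A|)^2 = q/|A|$, so $\sum_{m\ne0}|\widehat\sigma(m)|\le (\sum_m|\widehat\sigma(m)|^2)^{1/2}\cdot\max_{m\ne0}|\widehat\sigma(m)|^{1/2}\cdot q^{1/2}$-type bounds feed through; combining these yields an error of size $O(q^{1/2}/|A|^{1/2}\cdot q^{?})$, and after the two-dimensional (product over $m$ and $n$) bookkeeping and taking a fourth root where appropriate one should land on the claimed $O(q^{1/8}/|A|^{1/4})$.

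The step I expect to be the main obstacle is getting the error exponent exactly right, i.e. tracking how the two characters (one for $a-b$, one for $c-d$) combine. Naively bounding $\sum_{m,n\ne(0,0)}|\widehat\sigma(m)\widehat\sigma(n)|\cdot|M(g_1,\dots,g_4)|$ by the trivial bound $1$ on $M$ loses too much; the right move is to apply Lemma \ref{basic1} (the box-norm bound) to the modulated functions first, then apply Cauchy–Schwarz in $(m,n)$ to separate the $\widehat\sigma$ factors from the $\square$-norms, and only then invoke Parseval to convert $\sum\|g_i\|_\square^{\text{power}}$-type sums back into something $\le 1$. Concretely I anticipate using, for each fixed pair $(m,n)$, the bound $|M(g_1,g_2,g_3,g_4)|\le\|g_1\|_\square=\|f_1\|_\square\le 1$ is too crude, so instead I will bound the off-diagonal sum by $\big(\max_{(m,n)\ne 0}|\widehat\sigma(m)\widehat\sigma(n)|\big)^{1/2}$ times a full Parseval sum $\sum_{m,n}|\widehat\sigma(m)\widehat\sigma(n)|\,|M(g_1,\dots,g_4)|$ and then Cauchy–Schwarz the latter into a product of $(\sum|\widehat\sigma|^2)$-type quantities and a sum of box norms. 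The interplay between ``at least one of $m,n$ nonzero'' and ``$\widehat\sigma(0)=1$'' is the fiddly bookkeeping; once that is organized, the estimate $\max_{m\ne0}|\widehat\sigma(m)|\ll q^{1/2}/|A|$ does all the real work, and the final $q^{1/8}/|A|^{1/4}$ comes out as $\big((q^{1/2}/|A|)^{1/2}\big)^{1/2}$ after two successive square-roots (one from Cauchy–Schwarz on the $(m,n)$ sum, one from the $\square$-norm being a fourth root). I would double-check this exponent against the $d=2$ case in \cite{lyall2} to make sure the reduction from their $2d$-dimensional statement to our $d=1$ statement only changes the polynomial in $q$ in the expected way.
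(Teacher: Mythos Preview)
Your route is genuinely different from the paper's, and as written it has a real gap.

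\textbf{What the paper does.} The paper never expands both copies of $\sigma$ on the Fourier side and never compares $N$ to $M$ directly. It first proves the one–variable identity
\[
\E_{x,y}f(x)g(y)\sigma(x-y)=\E_{x,y}f(x)g(y)+O\!\left(\tfrac{q^{1/2}}{|A|}\right)
\]
and from it the quadratic inequality
\[
|\E_{x,y}f(x)g(y)\sigma(x-y)|^{2}\le \E_{x,y}f(x)f(y)+O\!\left(\tfrac{q^{1/2}}{|A|}\right).
\]
Then, treating $\sigma(c-d)$ as a probability weight (using $\E_{c,d}\sigma(c-d)=1$), it applies Cauchy--Schwarz in $(c,d)$ and the quadratic inequality in $(a,b)$ to obtain
\[
|N|^{2}\le \E_{a,b,c,d}f_{1}(a,c)f_{1}(b,c)f_{2}(a,d)f_{2}(b,d)\,\sigma(c-d)+O\!\left(\tfrac{q^{1/2}}{|A|}\right),
\]
and repeats once more in $(c,d)$ to reach $|N|^{4}\le M(f_{1},f_{1},f_{1},f_{1})+O(q^{1/2}/|A|)$. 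Taking a fourth root is precisely what produces the stated error $O((q^{1/2}/|A|)^{1/4})=O(q^{1/8}/|A|^{1/4})$.

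\textbf{The gap in your argument.} Your key claim that the modulated functions $g_{i}(a,c)=f_{i}(a,c)\chi(am+cn)$ satisfy $\|g_{i}\|_{\square}=\|f_{i}\|_{\square}$ is false with the box norm used here: $\|f\|_{\square}^{4}=M(f,f,f,f)$ has no complex conjugates and is only defined for real $[-1,1]$--valued functions, so neither $\|g_{i}\|_{\square}$ nor Lemma~\ref{basic1} makes sense for your complex $g_{i}$. (With the conjugated Gowers box norm the phases do cancel, but then you cannot invoke Lemma~\ref{basic1} as stated.) Beyond this, your error analysis is not carried out: once you average over $c,d$ first, the resulting function of $(a,b)$ no longer factors as $F(a)G(b)$, so neither Parseval nor the pointwise bound on $\widehat\sigma$ gives the saving you need, and the bookkeeping you sketch does not close.

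\textbf{What does work.} A cleaner execution of your idea---and one that actually yields a \emph{stronger} error than the lemma asserts---is to peel off one $\sigma$ at a time in physical space rather than Fourier-expand both at once. For each fixed $(c,d)$ the inner average is $\E_{a,b}F_{c,d}(a)G_{c,d}(b)\sigma(a-b)$ with $F_{c,d}(a)=f_{1}(a,c)f_{2}(a,d)$ and $G_{c,d}(b)=f_{3}(b,c)f_{4}(b,d)$, so the one--variable identity replaces $\sigma(a-b)$ by $1$ at cost $O(q^{1/2}/|A|)$; since $\E_{c,d}\sigma(c-d)=1$, this error survives the outer average. Repeating in $(c,d)$ gives $N=M+O(q^{1/2}/|A|)$, and then Lemma~\ref{basic1} yields $|N|\le \min_{j}\|f_{j}\|_{\square}+O(q^{1/2}/|A|)$, which is stronger than (and implies) the stated bound.
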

\begin{proof}
First, for any one variable functions $f, g : \mathbb F_q \rightarrow [-1, 1]$, we proceed with estimating $\E_{x, y}f(x)g(y)\sigma(x-y).$  Applying the Fourier inversion formula to the function $\sigma(x-y)$, 
\begin{align*}
\E_{x, y}f(x)g(y)\sigma(x-y)&
=\frac{1}{q^2}\sum_{x, y}f(x)g(y)\sum_{m}\widehat{\sigma}(m)\chi(m\cdot (x-y))\\
&=\frac{1}{q^2}\sum_{x, y}f(x)g(y)+\frac{1}{q^2}\sum_{x, y}\sum_{m\ne 0}f(x)g(y)\widehat{\sigma}(m)\chi(m(x-y)),
\end{align*}
where we used a simple observation that $\widehat{\sigma}(0)=1.$
Thus, 
\begin{align*}
\left\vert \E_{x, y}f(x)g(y)\sigma(x-y)-\frac{1}{q^2}\sum_{x, y}f(x)g(y)\right\vert&=\left\vert\sum_{m\ne 0}\widehat{\sigma}(m) \overline{\widehat{f}}(m) \widehat{g}(m)\right\vert\\
&\le \max_{m\ne 0}|\widehat{\sigma}(m)|\cdot||\widehat{f}||_{L^2}||\widehat{g}||_{L^2}\ll \frac{q^{1/2}}{|A|}.
\end{align*}
This gives that 
\begin{equation}\label{vtK}\E_{x, y}f(x)g(y)\sigma(x-y)=\E_{x, y}f(x)g(y)+ O\left(\frac{q^{1/2}}{|A|}\right).\end{equation}
Since $f: \mathbb F_q\to [-1,1]$, we see that $|\E_{x, y}f(x)g(y)|\le 1.$ 
In addition, notice from our assumption, $|A|\ge q^{2/3}$, that $\frac{q^{1/2}}{|A|} \ge \left(\frac{q^{1/2}}{|A|}\right)^2.$
Then we have 
\[|\E_{x, y}f(x)g(y)\sigma(x-y)|^2=E_{x, y, z, t}f(x)g(z)f(y)g(t)+O\left(\frac{q^{1/2}}{|A|}\right) \le \E_{x, y}f(x)f(y)+O\left(\frac{q^{1/2}}{|A|}\right),\] where we have used $\E_{z,t}g(z)g(t) \leq 1$. 
In other words, for functions $f,g: \mathbb F_q \to [-1,1],$ we have
\begin{equation}\label{eq3}
|\E_{x, y}f(x)g(y)\sigma(x-y)|^2 \le  \E_{x, y}f(x)f(y)+ O\left(\frac{q^{1/2}}{|A|}\right).
\end{equation}
Notice that we also have
\begin{equation}\label{eq4}
|\E_{x, y}f(x)g(y)\sigma(x-y)|^2 \le  \E_{x, y}g(x)g(y)+ O\left(\frac{q^{1/2}}{|A|}\right).
\end{equation}
We now show that $N(f_1, f_2, f_3, f_4)\le ||f_1||_{\square(V_1\times V_2) }+O\left(\frac{q^{1/2}}{|A|}\right)$. The same argument works also for functions $f_2, f_3, f_4$. We recall that
\begin{align*}
N(f_1, f_2, f_3, f_4)&=\E_{a, b, c, d}f_1(a, c)f_2(a, d)f_3(b, c)f_4(b, d)\sigma(a-b)\sigma(c-d).
\end{align*}
For a fixed pair $(c, d)$, set $f_{c,d}(a)=f_1(a, c)f_2(a, d)$ and $g_{c,d}(b)=f_3(b, c)f_4(b, d).$ Applying the Cauchy-Swarz inequality, we have
 \begin{align*}
|N(f_1, f_2, f_3, f_4)|^2& \le \left(\E_{c, d}\sigma(c-d)|\E_{a, b}f_{c,d}(a)g_{c,d}(b)\sigma(a-b)|\right)^2\\
&\le \left(\E_{c,d}\sigma(c-d) \right) \left(\E_{c,d} \sigma(c-d) | \E_{a,b} f_{c,d}(a)g_{c,d}(b) \sigma(a-b)|^2\right)\\
& =\E_{c,d} \sigma(c-d) | \E_{a,b} f_{c,d}(a)g_{c,d}(b) \sigma(a-b)|^2,
\end{align*}
where we have used the observation that $\E_{c, d}\sigma(c-d)=1$ to get the last line.
Therefore, using the inequality \eqref{eq3}, we get 
\begin{align} \label{eq5K}|N(f_1, f_2, f_3, f_4)|^2&\le \E_{a,b,c,d} \sigma(c-d) f_{c,d}(a)f_{c,d}(b) +  \E_{c,d} \sigma(c-d)O\left(\frac{q^{1/2}}{|A|}\right) \nonumber \\ 
&=\E_{a, b, c, d}f_1(a, c)f_1(b, c)f_2(a, d)f_2(b, d)\sigma(c-d)+O\left(\frac{q^{1/2}}{|A|}\right).\end{align}
We repeat the argument one more time with $f_{a,b}(c)=f_1(a, c)f_1(b, c)$ and $g_{a,b}(d)=f_2(a, d)f_2(b, d)$ for each fixed pair $(a, b)$, as a consequence, we derive
$$ |N(f_1, f_2, f_3, f_4)|^4\le \E_{a,b,c,d} f_{a,b}(c) f_{a,b}(d) + O\left(\frac{q^{1/2}}{|A|}\right)
=M(f_1,f_1,f_1,f_1)+ O\left(\frac{q^{1/2}}{|A|}\right).$$
Hence, 
\[|N(f_1, f_2, f_3, f_4)|\le \min_{j}||f_j||_{\square (V_1\times V_2)}+O\left(\frac{q^{1/8}}{|A|^{1/4}}\right).\]
\end{proof}
\subsection{Proof of Theorem \ref{thm2}}
We are ready to give the proof of Theorem \ref{thm2}.
It follows from Lemma \ref{co2.2} that there exist $\sigma$-algebras $\B$ and $\C$ on $V_1$ and $V_2$ of complexity at most $O(\epsilon^{-8})$, respectively, such that 
 \begin{equation}\label{88}||S-\E(S| \B\vee\C)||_{\square(V_1\times V_2)}\le \epsilon.\end{equation}

Let $g=\E(S|\B\vee\C)$ and write $S(x)=g(x)+h(x)$. It follows from (\ref{88}) that 
\begin{equation}\label{epK}||h||_{\square(V_1\times V_2)}\le \epsilon.\end{equation}
 
Notice that $g$ and $h$ are functions from $\mathbb{F}_q^2\to [-1,1]$. 

Under the expression $S(x)=g(x)+h(x)$, we obtain 
\begin{align*}
N(S, S, S, S)=&\E_{a, b, c, d}S(a, c)S(a, d)S(b, c)S(b, d)\sigma(a-b)\sigma(c-d)\\
=&N(g, g, g, g)+N(h, h ,h, h)\\&+\sum_{(j_1,j_2,j_3, j_4)\in \Omega}\E_{a, b, c, d}F_{j_1}(a, c)F_{j_2}(a, d)F_{j_3}(b, c)F_{j_4}(b, d)\sigma(a-b)\sigma(c-d),\end{align*}
where $\Omega:=\{1,2\}^4\setminus \{(1,1,1,1), (2,2,2,2)\},$  $F_1:=g$, and $F_2:=h.$ Notice that if $(j_1, j_2, j_3, j_4)\in \Omega,$  then  $F_{j_i}=h$ for some $i=1,2,3,4.$

It follows from Lemma \ref{co2.3} and \eqref{epK} that 
\[|N(h, h, h, h)|\le ||h||_{\square (V_1\times V_2)}+ O\left(\frac{q^{1/8}}{|A|^{1/4}}\right)\le \epsilon+O\left(\frac{q^{1/8}}{|A|^{1/4}}\right),\]
and, for all $(j_1, j_2, j_3, j_4)\in \Omega$,  
$$|N(F_{j_1}, F_{j_2}, F_{j_3}, F_{j_4})| \le ||h||_{\square (V_1\times V_2)}+ O\left(\frac{q^{1/8}}{|A|^{1/4}}\right)\le \epsilon+O\left(\frac{q^{1/8}}{|A|^{1/4}}\right).$$

In other words, we now have that 
\begin{equation}\label{mot1}|N(S, S, S, S)-N(g, g, g, g)|=O(\epsilon)+O\left(\frac{q^{1/8}}{|A|^{1/4}}\right).\end{equation}

We know from Lemma \ref{basic1} that
$$|M(f_1, f_2, f_3, f_4)|\le \min_{j}||f_j||_{\square(V_1\times V_2)}.$$  
Thus, 
\begin{equation}\label{hai1}|M(S, S, S, S)-M(g, g, g, g)|=O(\epsilon).\end{equation}
It follows from the definition of the function $g$ that it is a linear combination of the indicator functions $1_{\B_i\times \C_j}$ of the atoms $\B_i\times \C_j$ of the $\sigma$-algebra $\B\vee\C$. We also know from Lemma \ref{co2.2} that the number of terms in this linear combination is at most $2^{C\epsilon^{-8}}$, with coefficient at most one in modulus, so the expression of $N(g, g, g, g)$ can be rewrite as a linear combination of terms of the form 
\[N(B\times C, E\times F, G\times H, U\times W)=\E_{a, b, c, d} 1_{B\times C}(a, c)1_{E\times F}(a, d)1_{G\times H}(b, c)1_{U\times W}(b, d)\sigma(a-b)\sigma(c-d),\]
for some atoms $B\times C, E\times F, G\times H, U\times W$ in $\B\times \C$. On the other hand, 
\begin{align*}
&N(B\times C, E\times F, G\times H, U\times W)\\
&=\E_{a}(B\cap E)(a)\E_{b}(G\cap U)(b)\E_{c}(C\cap H)(c)\E_{d}(F\cap W)(d)\sigma(a-b)\sigma(c-d)\\
&=\left[\E_{a,b} (B\cap E)(a)(G\cap U)(b)\sigma(a-b)\right] \left[\E_{c,d} (C\cap H)(c)(F\cap W)(d)
\sigma(c-d)  \right].
\end{align*}
By applying the estimate \eqref{vtK}, we get
\begin{align*} &N(B\times C, E\times F, G\times H, U\times W)\\
&=\E_{a}(B\cap E)(a)\E_{b}(G\cap U)(b)\E_{c}(C\cap H)(c)\E_{d}(F\cap W)(d)+ O\left(\frac{q^{1/2}}{|A|}\right)\\
&=M(B\times C, E\times F, G\times H, U\times W)+ O\left(\frac{q^{1/2}}{|A|}\right).\end{align*}
This implies that
\[N(g,g,g,g)=M(g,g,g,g)+ O\left(2^{C\epsilon^{-8}}\frac{q^{1/2}}{|A|}\right).\]
Using (\ref{mot1}), the above estimate, and (\ref{hai1}), with some big positive constant $C$,
one has
\begin{align*} N(S, S, S, S)&\ge N(g, g, g, g)-C\epsilon -C\frac{q^{1/8}}{|A|^{1/4}} \\
&\ge  M(g,g,g,g)-C 2^{C\epsilon^{-8}}\frac{q^{1/2}}{|A|}-C\epsilon -C\frac{q^{1/8}}{|A|^{1/4}}\\
&\ge   M(S,S,S,S) -C\epsilon -C 2^{C\epsilon^{-8}}\frac{q^{1/2}}{|A|}-C\epsilon -C\frac{q^{1/8}}{|A|^{1/4}}.\end{align*}
We know from (\ref{xxxx}) that 
\[M(S, S, S, S)\ge \left(\frac{|S|}{q^2}\right)^4.\]
Thus, we obtain that
\begin{align*} N(S, S, S, S)&\ge \left(\frac{|S|}{q^2}\right)^4-2C\epsilon -C 2^{C\epsilon^{-8}}\frac{q^{1/2}}{|A|} -C\frac{q^{1/8}}{|A|^{1/4}}\\
&\ge \frac{|S|^4}{q^8} -2C \epsilon -C 2^{C\epsilon^{-8}} q^{-1/6} -C q^{-1/24},\end{align*}
where we used the assumption that $|A|\ge q^{2/3}$ to get the last line.
Hence, $N(S,S,S,S) \ge {|S|^4}/{2q^8}$ under the following condition:
\begin{equation}\label{conditionK}
|S|\ge(4C\epsilon + 2C 2^{C\epsilon^{-8}} q^{-1/6} +2C q^{-1/24})^{1/4} q^2.
\end{equation}

Now, given $0<\delta\le 1$,  we take  $\epsilon=\frac{\delta^4}{8C}>0.$ 
In addition, notice that  we can choose a positive integer $q_0$ satisfying the following two conditions:
$$ {q_0}^{1/6} \ge \frac{ 2^{C\epsilon^{-8}}}{\epsilon} \quad \mbox{and} ~~  {q_0}^{1/24} \ge \frac{1}{\epsilon}.$$
Therefore,  if $q\ge q_0$, then  the right hand side of \eqref{conditionK} can be estimated as follows:
$$(4C\epsilon + 2C 2^{C\epsilon^{-8}} q^{-1/6} +2C q^{-1/24})^{1/4} q^2  \le ( 4C\epsilon + 2C\epsilon + 2C \epsilon)^{1/4}q^2=\delta q^2. $$
Combining this estimate with \eqref{conditionK},  it follows that if $|S|\ge \delta q^2,$ then $N(S,S,S,S)\ge \frac{|S|^4}{2q^8}$.

It follows from the definition of $N(S, S, S, S)$ that the number of rectangles in $S$ with side-lengths in $A$, one side parallel to the line $y=0$, and one side parallel to the line $x=0$ is at least $q^4\cdot N(S, S, S, S)\cdot \frac{|A|^2}{q^2}$. Taking the sum over all directions that can be rotated to the line $y=0$, the theorem follows for the case of arbitrary finite fields. $\square$
\section{Proof of Theorem \ref{thm33333}}
By modifying slightly the proof of Theorem \ref{thm2}, one can obtain the following result which will be used in the proof of Theorem \ref{thm33333}.
\begin{theorem}\label{bosung}
For any $0<\delta\le 1$, there exists an integer $q_0=q_0(\delta)$ with the following property: if $q\ge q_0$ and $A$ is a multiplicative subgroup of $\mathbb{F}^*_q$ with $|A|\ge q^{2/3}$, then  any set $S\subset \mathbb{F}_q^2$ with $|S|\ge \delta q^2$ contains at least $\gg \frac{|S|^2|A|}{q}$ pairs of points $\left((a, b), (a, c) \right)\in S\times S$ such that $b-c\in A$.
\end{theorem}
\begin{proof}[Proof of Theorem \ref{thm33333}]

Since $|S|\gg q^{3/2}$, it has been proved in \cite{IR-05} that the number of pairs of points of length $t$ in $S$ is $(1+o(1))|S|^2/q$. This means that there exists a direction $u$ such that there are at least $(1+o(1))|S|^2/q^2$ pairs $(x, y)\in S\times S$ such that $x-y=u$. Since $t$ is a square, by a rotation, we can assume that $u$ is parallel to the line $y=0$. Let $F=\{(x, y)\in S\times S \colon x-y=u\}$. 
Then we have $|F|\gg |S|^2/q^2$. Define $F':=\{x\colon \exists y, ~(x, y)\in F\}$. It is clear that $|F'|=|F|\sim \delta^2q^2$. 
We now apply Theorem \ref{bosung} to have at least $\gg \frac{|F'|^2|A|}{q}$ pairs of points of the form $(a, b)$ and $(a, d)$ in $F'$ such that $b-d\in A$. 
This means that there are at least $\gg \frac{|S|^4|A|}{q^5}$ rectangles in $S$ with one side length $t$ and the other side length in $A$. 
\end{proof}

\section{Some Remarks}

For prime fields, let us recall the following result due to Bourgain, Glibichuk, and Konyagin \cite{BGK}. 
\begin{theorem} \label{smallsetdecay} Let $A$ be a multiplicative subgroup of $\mathbb F_q^*$, where $q$ is prime. Then if $|A|\ge q^\alpha$ for $\alpha>0$, there exists $\beta>0$ depending only on $\alpha$ such that 
$$ \max_{a\in \mathbb F_q^*} \left|\sum_{x\in A} \chi(ax)\right|\le |A| q^{-\beta}.$$
\end{theorem}
An explicit relation between $\alpha$ and $\beta$ in certain range can be found in \cite{DDD, Ga10, Sh1, Shk}, but we only need a non-trivial estimate on $A$ with  arbitrary small size.

For $A$ with $|A|\ge q^{\alpha}$, if \begin{equation}\label{eq909}q^\beta\to \infty ~\mathtt{as}~q\to \infty,\end{equation} then Theorem \ref{thm2} and Theorem \ref{thm33333} also hold with the same argument. We note that the condition (\ref{eq909}) will not be satisfied if the size of $A$ is too small, for example, $A=\{1, -1\}$. 

In the statement of Theorems \ref{thm2} and \ref{thm33333}, if we do not assume that the side-lengths are in a multiplicative subgroup, then the problem becomes much easier. More precisely, one can use an elementary argument to show that for any set $S\subset \mathbb{F}_q^2$ such that $|S|\gg q^{3/2}$, then the number of rectangles in $S$ is at least $\gg |S|^4q^{-3}$. 

Indeed, let $X:=\{(x, y, x^2+y^2)\colon (x, y)\in S\}$. Then we have $X$ is a subset on a paraboloid in $\mathbb{F}_q^3$. By a direct computation, we can check that there is a correspondence between each rectangle in $S$ and an energy quadruple $(a, b, c, d)\in X^4$ with $a+b=c+d$, and $a, b, c, d$ are distinct. 

By the Cauchy-Schwartz inequality, the number of such energy quadruples is at least $\frac{|X|^4}{q^3}-2|X|^2$, where we used the fact that $|X+X|\le q^3$. Since $|X|=|S|$ and $|S|\gg q^{3/2}$, the number of rectangles in $S$ will be at least $\gg \frac{|S|^4}{q^3}$.

\section{Acknowledgments}
We would like to thank Neil Lyall,  Igor Shparlinski, and Ilya Shkredov for useful discussions and comments.

Doowon Koh was supported by the National Research Foundation of Korea (NRF) grant funded by the Korea government (MIST) (No. NRF-2018R1D1A1B07044469). Thang Pham was supported by Swiss National Science Foundation grant P4P4P2-191067. Chun-Yen Shen was supported in part by MOST, through grant 108-2628-M-002-010-MY4.

 \bibliographystyle{amsplain}

\end{document}